	\newcommand{\abs}[1]{\left|	#1	\right|}
	\renewcommand{\epsilon}{\varepsilon}
	\newcommand{\set}[2]{ \left\{  #1  \colon  #2  \right\} }
	\newcommand{\oBallin}[3]{\mathrm{B}_{#1}^{#2}\left(#3 \right)}
	\newcommand{\bigset}[2]{ \big\{  #1  \colon  #2  \big\} }
	\newcommand{\smset}[1]{ \left\{  #1    \right\} }
	\newcommand{\scp}[2]{\left\langle #1 , #2 \right\rangle}
	\newcommand{\norm}[1]{\ensuremath{\left\| #1 \right\|}}
	\newcommand{\seqkZ}[1] {\left(	#1	\right)_{k\in\Z}}
\newcommand{\N}{\mathbb{N}}
\newcommand{\Z}{\mathbb{Z}}
\newcommand{\R}{\mathbb{R}}
\newcommand{\func}[5]{#1 \colon #2 \longrightarrow #3 \colon #4 \mapsto #5}
\newcommand{\smfunc}[3]{#1 \colon #2 \longrightarrow #3}
\newcommand{\caseelsepunkt}[3]{\left\{ \begin{array}{r l} #1 & \ 
\mathrm{if} \  #2 \\ #3 & \ \mathrm{else.} \end{array} \right.}
\newcommand{\Hilb}{\mathcal H}
\newcommand{\II}{]{-1},1[}
\newtheorem{thm}{Theorem}[section]
\newtheorem{la}[thm]{Lemma}
\newtheorem{Defn}[thm]{Definition}
\newtheorem{Remark}[thm]{Remark}
\newenvironment{defn}{\begin{Defn}\rm}{\end{Defn}}
\newenvironment{proof}{{\noindent\bf Proof.}}%
                  {\nopagebreak\hspace*{\fill}$\Box$\medskip\medskip\par}   
\newcommand{\Punkt}{\nopagebreak\hspace*{\fill}$\Box$}
\newcommand{\ve}{\varepsilon}
\newcommand{\at}{\symbol{'100}}
\newcommand{\sub}{\subseteq}
\DeclareMathOperator{\Supp}{supp}
\DeclareMathOperator{\rB}{B}
\newcommand{\dd}{d_\gamma}
\newcommand{\NB}{\mathcal N}
\newcommand{\vnull}{\frac{\eta'(0)}{\norm{\eta'(0)}}}
\begin{document}
\begin{center}
{\Large\bf Bounded solutions of finite lifetime\vspace{2mm}
to differential equations in Banach spaces}\\[7mm]
{\bf Rafael Dahmen and Helge Gl\"{o}ckner}\vspace{4mm}
\end{center}
\begin{abstract}\vspace{.3mm}\noindent
Consider a
smooth vector field
$f\colon \R^n\to\R^n$ and a maximal
solution $\gamma\colon \,]a,b[\,\to \R^n$
to the ordinary differential equation
$x'=f(x)$.
It is a well-known fact that,
if $\gamma$ is bounded, then
$\gamma$ is a global solution, i.e.,
$\,]a,b[\,=\R$.
We show by example that this conclusion
becomes invalid if $\R^n$ is replaced
with an infinite-dimensional Banach space.
\end{abstract}
{\footnotesize {\em Classification}:
Primary 34C11;
secondary
26E20,
34A12,
34G20,
37C10,
34--01\\[2mm]
%
%
%
%
{\em Key words}: Ordinary differential equation, smooth dynamical system,
autonomous system,
Banach space, finite life time, maximal solution, bounded solution,
relatively compact set, tubular neighborhood, nearest point}\\[8mm]
\noindent
{\large\bf Introduction and statement of result}\\[2.5mm]
The starting point for our journey
is a well-known result in the theory of ordinary differential
equations:
If the image $\gamma(]a,b[)$ of a
maximal solution $\gamma\colon\,]a,b[\,\to U$
to a differential equation
\[
x'=f(x)
\]
with locally Lipschitz right-hand side
$f\colon U\to \R^n$
on an open subset $U\sub \R^n$
is relatively compact in~$U$,
then~$\gamma$ is globally defined,
i.e., $]a,b[\,=\R$
(cf.\ \cite[Chapter~I, Theorem 2.1]{Hal},
\cite[Korollar in 4.2.III]{Koe},
\cite[Corollary 2 in \S2.4]{Per}).
In the special
case $U=\R^n$,
this entails that bounded maximal solutions
$\gamma\colon \,]a,b[\,\to\R^n$
are always globally defined,
exploiting that
bounded sets and relatively compact
subsets in~$\R^n$ coincide
by the Theorem of Bolzano-Weierstrass
(see, e.g., Corollaire~1 in \cite[Chapter IV, \S1, no.\,5]{Bou},
or \cite[Lemma~2.4]{Rob} for this fact).\\[3mm]
The first criterion applies equally well
if $\R^n$ is replaced with a Banach space
(cf.\ \cite[Chapter IV, Corollary~1.8]{Lan}).
However,
bounded maximal solutions to ordinary differential
equations in infinite-dimensional
Banach spaces need not be globally defined.
Non-autonomous examples with locally Lipschitz
right hand sides were given in
\cite{Di0} (in the Banach space $c_0$)
and for Banach spaces admitting a Schauder basis
in \cite{De1}, \cite{De2}.
By now, it is known that
the pathology occurs
for suitable autonomous systems
on \emph{every} infinite-dimensional Banach space,
with locally Lipschitz right-hand side~\cite{Kom}.\\[3mm]
In the current note,
we describe an easy, instructive
example of a non-global, bounded solution
to a vector field on a separable Hilbert
space. In contrast to all of the cited literature,
the vector field we construct is not only
locally Lipschitz, but smooth (i.e., $C^\infty$).\\[4mm]
{\bf Theorem.} \emph{There exists a smooth vector field
$f\colon \Hilb\to \Hilb$ on the real Hilbert space
$\Hilb:=\ell^2(\Z)$ of square summable real
sequences $(a_n)_{n\in \Z}$,
such that the ordinary differential equation
$x'=f(x)$ has a bounded maximal solution~$\gamma$
which is not globally defined.}\\[5mm]
Our strategy is to describe, in a first step,
a smooth curve $\gamma\colon \,]{-1},1[\,\to \Hilb$
whose restrictions to $\,]{-1},0]$
and $[0,1[$ have infinite arc length
(Section~\ref{road}).
In a second step,
we construct a smooth vector field
$f\colon \Hilb\to \Hilb$ such that
\[
\gamma'(t)=f(\gamma(t))\quad\mbox{for all~$\,t\in \,]{-1},1[$,}
\]
ensuring that
$\gamma$ is a solution to $x'=f(x)$
(Section~\ref{sec_ConstructionOfVectorField}).
Thus $\gamma$ is not globally defined
and it has to be a maximal solution
because otherwise the arc length
on one of the subintervals would be finite
(contradiction).
\section{The long and winding road}\label{road}
%
%
We fix a function $\smfunc{h}{\R}{\R}$ with the following properties:
\begin{itemize}
 \item [(i)] $h$ is smooth ($C^\infty$)
with compact support inside $[-2,1]$;
 \item [(ii)] $h(-2)=h(-1)=h(1)=0$;
 \item [(iii)] $h(0)=1$;
 \item [(iv)] $h'(t)> 0$ for all $t\in [-1,0[$, $h'(t)<0$
for $t\in \,]0,1[$ (whence $h(t)>0$ there)
and $h'(0)=0$.
\end{itemize}
\begin{figure}[!hbtp]
\centering
\includegraphics[width=10cm]{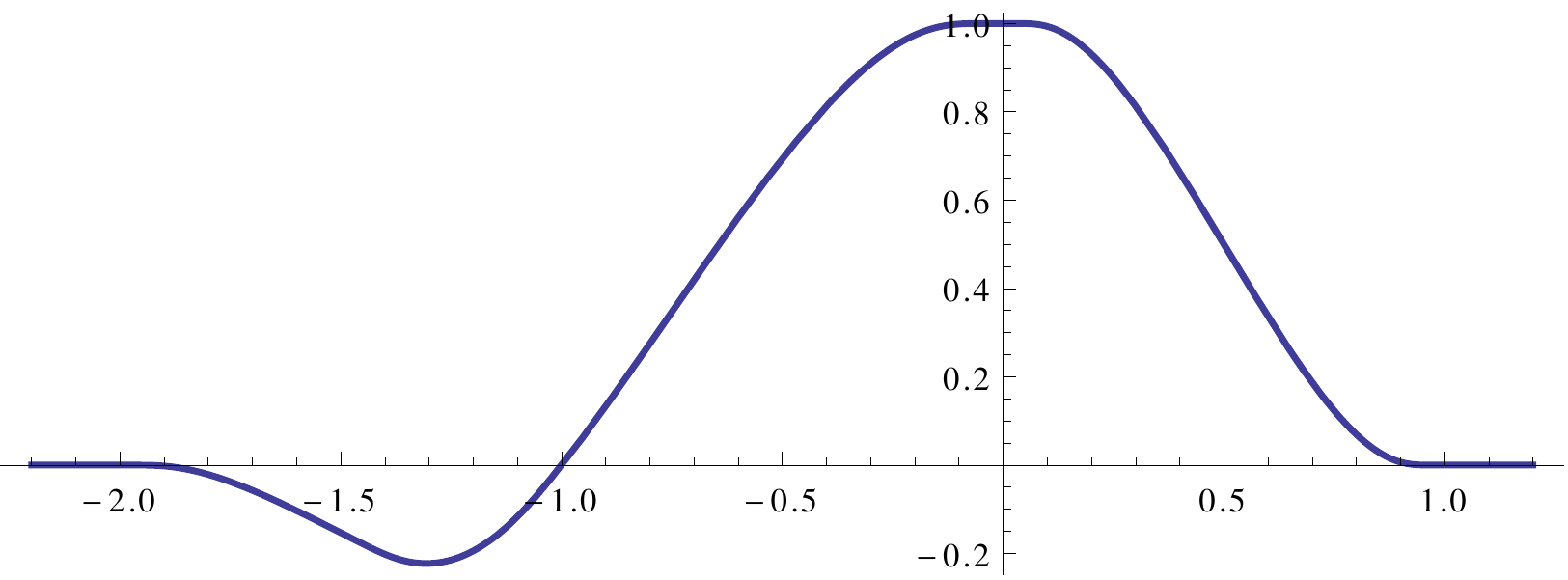} 
\caption{Graph of the function $h$}
\end{figure}


The existence of such a function is shown in
Section~\ref{sec_Functionh}.\\[2mm]
Using this function, we can define a smooth curve with
values in the Hilbert space $\Hilb=\ell^2(\Z)$ via\vspace{-2mm}
\[
\eta\colon \R\to \Hilb,\quad t\mapsto\sum_{k\in\Z}h(t-k)e_k,\vspace{-2mm}
\]
where $\seqkZ{e_k}$ denotes the standard orthonormal basis of~$\Hilb$.
Note that this sum is locally finite since $h$ has compact support;
hence~$\eta$ is smooth.\\[2mm]

 \begin{figure}[h]
\input{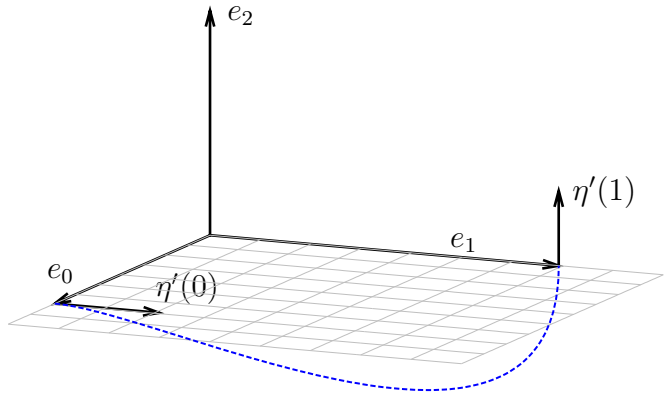}
\caption{The curve $\eta$ on the interval $[0,1]$}
 \end{figure}

\noindent
Calculating the derivative $\eta'(t)=\sum_{k\in\Z}h'(t-k) e_k$,
we see that $\eta'(t)$ is always non-zero.
In fact, if $n\in\Z$ with $t\in [n,n+1[$, then $t-n-1\in[{-1},0[$ and thus
$\langle e_{n+1},\eta'(t)\rangle=h'(t-n-1)\not=0$.\\[2mm]
By construction of~$\eta$, we have $\eta(n)=e_n$ for each $n\in\Z$,
which implies that~$\eta$ has infinite arc length. Since the
real-valued function~$h$ is bounded, it follows that the curve~$\eta$
is (norm-) bounded in the Hilbert space $\Hilb$.\\[2mm]
Next, we fix a diffeomorphism $\smfunc{\varphi}{\II\,}{\R}$ between
the open interval $\II$ and the real line, e.g.\
$\varphi(t)=\tan(\frac{\pi}{2}t)$ or
$\varphi(t)=\frac{t}{1-t^2}$. We now define
\[
\gamma\colon \II\,\to\Hilb,\quad t\mapsto \eta(\varphi(t)).
\]
This curve is just a reparametrization of~$\eta$ and hence shares
some important properties with~$\eta$, namely it is bounded in~$\Hilb$,
the derivative is always nonzero and it has infinite arc length.
However, one important difference is that~$\gamma$ is not globally
defined, so if we are able to show that~$\gamma$ is a maximal
solution to a (time-independent) differential equation, then
our theorem is established.
\section{The surrounding landscape}\label{sec_ConstructionOfVectorField}
Having constructed the curve $\smfunc{\gamma}{\II\,}{\Hilb}$ in
Section~\ref{road}, we shall now define a smooth
vector field $\smfunc{f}{\Hilb}{\Hilb}$ such that~$\gamma$ is a
solution to the differential equation $x'=f(x)$. Since~$\gamma$
(as well as its restriction to $]{-1},0]$ and its restriction to
$[0,1[$)
has infinite arc length by construction, the solution is maximal,
and our theorem follows.\\[2mm]
Write $\langle x,y\rangle:=\sum_{n\in\Z}x_ny_n$
for $x=(x_n)_{n\in\Z}$, $y=(y_n)_{n\in\Z}$ in~$\Hilb$,
and $\|x\|:=\sqrt{\langle x,x\rangle}$. 
We shall use the following facts about distances (to be proven
in Section~\ref{sec_Distance}):
\begin{itemize}
 \item [(a)] The distance function
\[
\dd\colon \Hilb\to\,[0,\infty[,\quad x\mapsto
\inf\bigset{\norm{\gamma(t)-x}}{t\in\,\II}
\]
from the curve~$\gamma$
is continuous on $\Hilb$. In particular, the set
$U_r:=\set{x\in\Hilb}{\dd(x)<r}$ is open and contains the image of~$\gamma$,
for each $r>0$.
 \item [(b)] There is a number $\rho>0$ such that for all $x \in U_\rho$
there exists a unique $\tau(x)\in\, \II$ such that
$\gamma(\tau(x))$ has minimum distance to~$x$, that is
$\norm{\gamma(\tau(x))-x}=\dd(x)$.
 \item [(c)] The map $\smfunc{\tau}{U_\rho}{\,\II}$ is smooth.\footnote{See \cite{Car}, \cite{Die} and \cite{Lan} for differential calculus
on Banach spaces.}
\end{itemize}
The preceding properties entail that the squared distance function
\[
 \func{\dd^2}{U_\rho}{[0,\infty[}{x}{(\dd(x))^2=\norm{\gamma(\tau(x))-x}^2}
\]
is smooth on the neighborhood~$U_\rho$ of~$\gamma$.
This enables us to define the smooth vector field~$f$,
using a suitable cut-off function~$\theta$:
\[
f\colon \Hilb\to \Hilb,\quad x\mapsto
\left\{\begin{array}{cl}
\theta\left(\dd^2(x)\right)\gamma'(\tau(x))& \mbox{if $\,\dd(x)<\rho$;}\\
0 & \mbox{if $\,\dd(x)>\rho/2$.}
\end{array}
\right.
\]
Here, $\smfunc{\theta}{\R}{\R}$ is a fixed smooth function with
$\theta(0)=1$ which vanishes
outside of $[{-\rho^2/4},\rho^2/4]$. It is easily checked
using the properties (a), (b) and (c)
that the map~$f$ is well defined and smooth.
The curve~$\gamma$ is a solution to the associated differential equation,
since for all $t\in\,\II\,$:
\[
f(\gamma(t)) = \theta\Big(\underbrace{\dd^2(\gamma(t))}_{=0}\Big)
\, \gamma'(\ \underbrace{\tau(\gamma(t))}_{=t}\ ) =\gamma'(t).\vspace{-2mm}
\]
This shows that there is a smooth vector field $f$ on $\Hilb$ such that
a maximal solution of the differential equation is bounded but has only
finite lifetime.
\section{Details for Section \ref{road}}\label{sec_Functionh}
In Section~\ref{road}, we used a function~$h\colon \R\to\R$
with certain properties (i)--(iv).\linebreak
We now prove the existence of~$h$.
%
By the Fundamental Theorem,
\[
 \func{h}{\R}{\R}{x}{\int_{-2}^x \!g(t) \, dt}
\]
with a suitable smooth function $\smfunc{g}{\R}{\R}$. This reduces
the problem of finding~$h$ to the problem of finding
a function~$g$ with the following properties:
\begin{itemize}
 \item [(i)$'$]  	$g$ is smooth with support inside $[{-2},1]$
 and integral $\int_{-2}^1g(t) dt =0$;
 \item [(ii)$'$] 	$\int_{-2}^{-1}g(t)dt=0$;
 \item [(iii)$'$] 	$\int_{-1}^0   g(t)dt=1$;
 \item [(iv)$'$] 	$g(t)> 0$ for all $t\in [{-1},0[\,$, $g(t)<0$
for all $t\in\,]0,1[\,$, and $g(0)=0$.
\end{itemize}
%
It remains to construct such a function~$g$. To this end,
we start with a smooth function $\smfunc{\psi}{\R}{\R}$ which is
positive on $\II$ and zero elsewhere, e.g.
\[ 
 \func{\psi}{\R}{\R}{t}{\caseelsepunkt{e^{-\frac{1}{1-t^2}}}{\abs{t}<1}{0\;\;\;}}
\]
Using dilations and translations,
we can create a function $\psi_{]a,b[}$
from the preceding one,
which is positive on any given interval~$]a,b[\,$:
\newcommand{\ff}[2]{\psi_{]#1,#2[}}
\[
 \func{\ff{a}{b}}{\R}{\R}{t}{\psi\left({-1}+2\, \frac{t-a}{b-a}\right).}
\]
Now, we define the function $g$ as
\newcommand{\co}  {A}
\newcommand{\coo} {B}
\newcommand{\cooo}{C}
\begin{equation}\label{makeg}
 g:= \co \cdot \ff{-2}{-1} + \coo \cdot \ff{-3/2}{0} +
 \cooo \cdot \ff{0}{1}		
\end{equation}
with constants $\co,\coo,\cooo\in \R$ determined as follows:\\[2mm]
Condition (iii)$'$ requires that
$B=(\int_{-1}^0\psi_{]{-3/2},0[}(t)\,dt)^{-1}$. Thus $B>0$.\\[2mm]
Condition (ii)$'$ requires that $A\int_{-2}^{-1}\psi_{]{-2},{-1}[}(t)\,dt
=-B\int_{-3/2}^{-1}\psi_{]{-3/2},0[}(t)\,dt$ with~$B$
as just determined. This equation can
uniquely be solved
for~$A$ (with $A<0$).\\[2mm]
Condition~(i)$'$ requires that
$C\int_0^1\psi_{]0,1[}(t)\,dt=-\int_{-2}^0g(t)\,dt=-1$
(where we used~(iii)). This equation
can be solved uniquely for~$C$
(with $C<0$).\\[2mm]
Also~(iv)$'$ holds as $g(0)=0$ by (\ref{makeg}),
$g(t)=B\,\psi_{]{-3/2},0[}(t)>0$ for $t\in\,[{-1},0[$
and $g(t)=C\psi_{]0,1[}(t)<0$ for $t\in\,]0,1[$.
\section{Details for Section \ref{sec_ConstructionOfVectorField} }\label{sec_Distance}
In this section,
we prove the facts (a), (b) and (c)
which were used in Section~\ref{sec_ConstructionOfVectorField} to construct
the vector field~$f$.

(a) is easy to show: In fact, if a metric space $X$ is given and
$A\subseteq X$ is a non-empty subset, then the distance function
  \[
   \func{d_A}{X}{[0,\infty[}{x}{\inf_{a\in A} d(x,a)}
  \]
is the infimum of a family of Lipschitz continuous functions
on~$X$ with Lipschitz constant~$1$. Hence~$d_A$
is Lipschitz with constant~$1$ as well.\\[2.3mm]
We now prove~(b) and~(c)
using a so-called tubular neighborhood (a standard tool
in differential geometry~\cite{Lan}).
No familiarity
with this method is presumed:
All we need can be achieved
directly, by elementary arguments.\\[2.3mm]
We start with two easy lemmas concerning rotations:
\begin{la}[Rotation in {\boldmath$\R^2$}]\label{lemma_rotation_2D}
Let $v,w\in\R^2$ be vectors in $\R^2$ with norm~$1$ and let
$\smfunc{R_{v,w}}{\R^2}{\R^2}$ be the rotation around the origin
mapping~$v$ to~$w$.
Then $R_{v,w}(w)= 2\scp{v}{w} w -v$.
\end{la}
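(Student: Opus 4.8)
The plan is to reduce the statement to a double-angle computation in a single well-chosen coordinate frame. First I would record the structure of plane rotations: the rotation group $\mathrm{SO}(2)$ is abelian, and for unit vectors $v,w\in\R^2$ there is a \emph{unique} rotation carrying $v$ to $w$, namely the rotation $R_\theta$ through the directed angle $\theta$ from $v$ to $w$ (this is well defined even when $w=\pm v$). Thus $R_{v,w}=R_\theta$, and, since both vectors are unit, $\scp{v}{w}=\cos\theta$. Because $R_{v,w}(v)=w$ by definition, applying the map a second time yields
\[
R_{v,w}(w)=R_\theta\bigl(R_\theta v\bigr)=R_{2\theta}v,
\]
so everything comes down to identifying $R_{2\theta}v$ with $2\scp{v}{w}w-v$.

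Next I would verify this identity in an orthonormal frame adapted to $v$. Choosing coordinates so that $v=(1,0)$, we have $w=(\cos\theta,\sin\theta)$, and a direct computation gives $R_{2\theta}v=(\cos 2\theta,\sin 2\theta)$. On the other hand, $2\scp{v}{w}w-v=2\cos\theta\,(\cos\theta,\sin\theta)-(1,0)=(2\cos^2\theta-1,\,2\sin\theta\cos\theta)$, which equals $(\cos 2\theta,\sin 2\theta)$ by the double-angle formulas. The reduction to this frame is legitimate because both sides transform equivariantly under an orthogonal change of coordinates: rotations commute with one another, and the inner product $\scp{v}{w}$ is rotation-invariant, so an identity valid in one frame holds in all of them.

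A cleaner coordinate-free route, which I would at least remark on, is to recognize $2\scp{v}{w}w-v$ as the reflection of $v$ in the line $\R w$ (using $\norm{w}=1$). Reflecting $v$, which sits at angle $0$, across the axis at angle $\theta$ sends it to angle $2\theta$ while preserving its length, and this is exactly $R_{2\theta}v$; the degenerate cases $w=v$ and $w=-v$ are then immediate. The computation is entirely routine, so there is no substantial obstacle; the only points deserving care are the uniqueness and well-definedness of $R_{v,w}$ and the justification of passing to a convenient coordinate system.
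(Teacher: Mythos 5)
Your proof is correct and follows essentially the same route as the paper: pass to coordinates with $v=(1,0)$, $w=(\cos\theta,\sin\theta)$, and verify the identity via the double-angle formulas. The additional remarks on the uniqueness of $R_{v,w}$ and the reflection interpretation are fine but not needed beyond what the paper already does.
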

\begin{proof}
Passing to a different coordinate system if necessary,
we may assume that $v=(1,0)$ and $w=(\cos\alpha,\sin\alpha)$.
Then
\begin{eqnarray*}
R_{v,w}(w)&=&(\cos (2\alpha),\; \sin (2\alpha))
\;=\; (\cos^2\alpha-\sin^2\alpha,\; 2\cos\alpha\sin\alpha)\\
&=& (2\cos^2\alpha-1,\; 2\cos\alpha \sin\alpha)\\
&=&
2\langle e_1,(\cos\alpha,\sin\alpha)\rangle\,(\cos\alpha,\sin\alpha)-(1,0),
\end{eqnarray*}
which indeed coincides with $2\langle v,w\rangle w-v$.\vspace{-3mm}
\end{proof}
\begin{la}[Rotation in a real Hilbert space]\label{lemma_rotation_Hilb}
Let $v,w\in\Hilb$ be vectors of norm~$1$. We assume that $v\neq -w$.
Let $\smfunc{R_{v,w}}{\Hilb}{\Hilb}$ be the rotation around~$0$
taking~$v$ to~$w$ and fixing every vector orthogonal to~$v$ and~$w$.
Then the map $R_{v,w}$ is given by the formula
 \[
R_{v,w}(x) = x 
+ \frac{ (2\scp{v}{w}+1)\scp{x}{v} -\scp{x}{w} }{1+\scp{v}{w}} \,w
- \frac{ \scp{x}{v+w} }{1+\scp{v}{w}} \, v\quad\mbox{for all $\,x\in\Hilb$.}
 \]
 In particular, the result depends smoothly on all parameters.
\end{la}
\begin{proof}
 Since both sides of the equation are linear in~$x$, it is enough to
check the following three special cases
(where we used Lemma~\ref{lemma_rotation_2D} for the second):\,\footnote{As
usual, for a subset $Y\subseteq \Hilb$ we write
$Y^\perp:=\{x\in \Hilb\colon (\forall y\in Y)\;\langle x,y\rangle=0\}$.}
 \[
R_{v,w}(v) = w; \quad\;
R_{v,w}(w)= 2\scp{v}{w}w-v; \quad\;
R_{v,w}(x)=x\, \hbox{ for all }\,x\in \smset{v,w}^\bot.
 \]
All three cases are settled by straightforward calculations.
\end{proof}
Note that the map $R_{v,w}$ is not defined in the case that $v=-w$
as the denominator $1+\scp{v}{w}$ becomes zero.
\begin{defn}
 By the \emph{normal bundle of the curve} $\eta$, we mean the subset
 \[
  \NB :=\set{(\eta(t),v)}{t\in\R, v\in\Hilb \hbox{ with }\scp{\eta'(t)}{v}=0}
 \]
of $\Hilb\times\Hilb$.
 It consists of all vectors with basepoint on the curve which are
perpendicular to the curve.
\end{defn}
Although the set $\NB$ carries the structure of a smooth vector bundle,
we need not use the theory of vector bundles in what follows.
Recall that $\eta'(t)\not=0$ for all $t\in \R$.
It is useful to record further properties of~$\eta$.
We shall use that
\begin{equation}\label{eqnrho0}
\rho_0:=\min_{t\in [0,1]}\sqrt{h(t)^2+h(t-1)^2}>0\vspace{-1mm}
\end{equation}
as $h(t)>0$ for all $t\in\,]{-1},1[$.
\begin{la}\label{newlem}
\begin{itemize}
\item[{\rm(a)}]
$\eta\colon \R\to\Hilb$ is injective.
\item[{\rm(b)}]
If $n\in \Z$ and $t\in [n,n+1[$,
then $\eta(t)$ is a linear combination
of $e_n$, $e_{n+1}$ and $e_{n+2}$.
\item[{\rm(c)}]
$\|\eta(s)-\eta(t)\|\geq \rho_0$ for all $s,t\in\R$
such that $|s-t|>3$.
\item[{\rm(d)}]
$\frac{\eta'(t)}{\|\eta'(t)\|}\not=-\frac{\eta'(0)}{\|\eta'(0)\|}$
for all $t\in\R$.
\end{itemize}
\end{la}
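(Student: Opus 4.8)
```latex
The plan is to prove the four assertions of Lemma~\ref{newlem} in order,
exploiting the explicit structure of $\eta(t)=\sum_{k\in\Z}h(t-k)e_k$ and the
sign conditions (i)--(iv) on~$h$. The key structural fact is that, because
$\Supp(h)\sub[-2,1]$, for $t\in[n,n+1[$ the only indices $k$ with
$h(t-k)\neq0$ are those with $t-k\in\,]{-2},1[$, i.e.\ $k\in\{n-1,n,n+1,n+2\}$;
and more carefully, using $h(-1)=0$ and $h(1)=0$ together with (iv), one checks
that the coordinate $\langle e_{n-1},\eta(t)\rangle=h(t-n+1)$ vanishes on the
relevant range. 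This immediately gives part~(b): on $[n,n+1[$ only the
coefficients of $e_n$, $e_{n+1}$, $e_{n+2}$ survive. I would make this the
first step, since both injectivity and the distance estimate rest on it.

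For part~(a), I would argue by contradiction. Suppose $\eta(s)=\eta(t)$ with
$s<t$. By part~(b), locating $s\in[m,m+1[$ and $t\in[n,n+1[$ forces the
supports in the two standard-basis expansions to overlap, so $n-m$ is small
(at most~$2$). The plan is then to compare individual coordinates: the
equality $\eta(s)=\eta(t)$ means $h(s-k)=h(t-k)$ for every $k$. I would pin
down the overlapping indices and use the strict monotonicity in~(iv)
($h$ strictly increasing on $[-1,0]$ and strictly decreasing on $[0,1]$, with
the single peak $h(0)=1$) to force $s=t$, contradicting $s<t$. The cleanest
route is probably to track the coordinate where one curve is in its
increasing regime and the other in its decreasing regime and derive an
incompatibility of signs or values.

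For part~(c), the idea is that if $|s-t|>3$ then the ``active windows''
$\{k:h(s-k)\neq0\}$ and $\{k:h(t-k)\neq0\}$ of width at most~$3$ are disjoint,
so $\eta(s)$ and $\eta(t)$ have disjoint coordinate supports. Hence
$\|\eta(s)-\eta(t)\|^2=\|\eta(s)\|^2+\|\eta(t)\|^2\geq\|\eta(s)\|^2$, and I
would bound $\|\eta(s)\|$ from below by the two coordinates coming from the
integer-adjacent translates: writing $s\in[n,n+1[$ gives the contributions
$h(s-n)$ and $h(s-n-1)$, whose squares sum to at least $\rho_0^2$ by the very
definition~(\ref{eqnrho0}) of $\rho_0$ after a translation reducing $s-n$ into
$[0,1]$. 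That yields $\|\eta(s)-\eta(t)\|\geq\rho_0$.

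For part~(d), the claim is that the unit tangent never points exactly opposite
to $\vnull$, which is what Lemma~\ref{lemma_rotation_Hilb} needs to keep
$R_{\eta'(t)/\|\eta'(t)\|,\,\vnull}$ well defined. I would compute
$\eta'(0)=\sum_k h'(-k)e_k$; by~(iv) the coordinate $\langle e_1,\eta'(0)\rangle
=h'(-1)>0$, while $\langle e_0,\eta'(0)\rangle=h'(0)=0$. The opposite vector
$-\vnull$ therefore has a strictly negative $e_1$-coordinate. For the unit
tangent at a general~$t$, I would again localize via~(b): the dominant
coordinates of $\eta'(t)$ are among $e_n,e_{n+1},e_{n+2}$, and one checks that
the pattern of signs forced by~(iv) can never reproduce the specific sign
pattern of $-\vnull$ simultaneously in all coordinates. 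The main obstacle here
is organizing the sign bookkeeping across the three or four live coordinates so
that the single case $\eta'(t)/\|\eta'(t)\|=-\vnull$ is excluded cleanly rather
than through a proliferation of subcases; I expect this to be the most
delicate part, whereas (b) and (c) are essentially immediate once the support
window is identified, and (a) follows from monotonicity.
```
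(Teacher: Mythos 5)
Your parts (b) and (c) are correct and follow essentially the same route as the paper: the support condition $\Supp(h)\sub[-2,1]$ confines the nonzero coordinates of $\eta(t)$ for $t\in[n,n+1[$ to $e_n,e_{n+1},e_{n+2}$, and for $|s-t|>3$ the two coordinate windows are disjoint, so orthogonality together with the definition of $\rho_0$ gives the lower bound. Your outline of (a) is also the paper's argument in essence: overlap of supports forces $s$ and $t$ into the same interval $[n,n+1[$ (the paper gets this in one stroke by noting $\langle e_n,\eta(s)\rangle=h(s-n)=0$ once $s\geq n+1$ while $\langle e_n,\eta(t)\rangle=h(t-n)>0$), and then strict monotonicity of $h$ on $[0,1]$ applied to the single coordinate $h(\cdot-n)$ forces $s=t$. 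You leave the case analysis as a plan, and the ``increasing versus decreasing regime'' comparison you anticipate is not actually needed: once both parameters lie in $[n,n+1[$, only the decreasing branch of $h$ enters.

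The genuine gap is in (d). You propose to exclude $\eta'(t)/\norm{\eta'(t)}=-\vnull$ by sign bookkeeping across the live coordinates, but you never identify the one observation that makes this tractable: since $h'(-2)=h'(1)=0$ (by smoothness and compact support) and $h'(0)=0$ (by (iv)), the vector $\eta'(0)=\sum_k h'(-k)e_k$ has exactly one nonzero coordinate, namely $\langle e_1,\eta'(0)\rangle=h'(-1)>0$, so that $\vnull=e_1$. The exclusion then collapses to two cases: if $\eta'(t)$ were a negative multiple of $e_1$, then $h'(t-1)<0$, which (since $h'\geq 0$ on $[-1,0]$ and $h'=0$ off $[-2,1]$) forces $t-1\in\,]{-2},{-1}[$ or $t-1\in\,]0,1[$; in the first case $\langle e_0,\eta'(t)\rangle=h'(t)>0$, in the second $\langle e_2,\eta'(t)\rangle=h'(t-2)>0$, contradicting that all coordinates of $\eta'(t)$ other than the $e_1$-coordinate must vanish. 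Without pinning down $\eta'(0)=h'(-1)e_1$, your plan of showing that $\eta'(t)$ ``can never reproduce the specific sign pattern of $-\vnull$ in all coordinates'' does not get off the ground, because you have not determined what that pattern is; this is precisely the step you flag as delicate and leave undone.
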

\begin{proof}
(a)
Let $t\leq s$ in~$\R$ such that $\eta(t)=\eta(s)$.
There is $n\in \Z$ such that $t\in [n,n+1[$.
If $s\geq n+1$ was true, then $\langle e_n,\eta(s)\rangle=h(s-n)=0$
(as $\Supp(h)\subseteq [{-2},1]$)
while $\langle e_n,\eta(t)\rangle = h(t-n)>0$ (since $t-n\in [0,1[$).
Thus we would get $\eta(s)\not=\eta(t)$, a contradiction.
As a consequence, $s\in [n,n+1[$
as well. Now $h(s-n)=\langle e_n,\eta(s)\rangle=
\langle e_n,\eta(t)\rangle=h(t-n)$ implies
that $s=t$, using that $h|_{[0,1]}$ is strictly decreasing and hence
injective.

(b) Let $m\in \Z$ with $h(t-m)=\langle e_m,\eta(t)\rangle\not=0$.
Since $\Supp(h)\subseteq [{-2},1]$,
we deduce that $t-m\in \,]{-2},1[$,
whence $m\in \,]t-1,t+2[$ and thus $m\in \, ]n-1,n+3[$,
which entails $m\in \{n,n+1,n+2\}$.
The assertion follows.

(c) As $\Supp(h)\subseteq [{-2},1]$
and $|s-t|>3$, we cannot have
both $h(t-k)\not=0$ and $h(s-k)\not=0$
for any $k\in \Z$.
Hence $\eta(t)$ and $\eta(s)$ are orthogonal vectors
and thus $\|\eta(s)-\eta(t)\|=\sqrt{\|\eta(s)\|^2+\|\eta(t)\|^2}
\geq\|\eta(t)\|
=\sqrt{\sum_{k\in\Z}h(t-k)^2}\geq
\sqrt{h(t-n-1)^2+h(t-n)^2}
\geq\rho_0$, with~$n$ as in~(b).

(d) Note that
$\eta'(0)=h'(-1)e_1$ (as $h'(-2)=h'(0)=h'(1)=0$),
where $h'({-1})>0$.
If~$\eta'(t)$ was a negative real multiple
of~$\eta'(0)$ and hence of~$e_1$, then
$h'(t-1)\!=\!\langle e_1,\eta'(t)\rangle\!<\!0$,
thus $t-\!1\in \, ]{-2},{-1}[$ or
$t-\!1\in \,]0,1[$ (as $h'|_{[{-1},0]}\geq 0$).
In the first case, $\langle e_0,\eta'(t)\rangle=h'(t)>0$,
contrary to $\eta'(t)\in -\R \,e_1$.
In the second case, $\langle e_2,\eta'(t)\rangle=h'(t-2)>0$,
contrary to $\eta'(t)\in -\R \, e_1$.\vspace{-3mm}
\end{proof}
\begin{la}[Global parametrization of the normal bundle of {\boldmath$\eta$}]
\label{lemma_parametrization_of_normal_bundle}$\;$\\
Let $\Hilb_0:=\smset{\eta'(0)}^\bot$ and
$R_t:= R_{\vnull,\frac{\eta'(t)}{\norm{\eta'(t)}}}$ be the rotation
turning $\vnull$ to $\frac{\eta'(t)}{\norm{\eta'(t)}}$, as introduced in
Lemma~{\rm\ref{lemma_rotation_Hilb}}.
Then the following map is a bijection:
\[
\Psi\colon \R\times \Hilb_0\to \NB,
\quad
(t,x)\mapsto \left(\eta(t), R_t (x )\right).
\]
\end{la}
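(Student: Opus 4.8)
The plan is to exploit that each $R_t$ is a \emph{rotation}, hence an orthogonal (inner-product preserving) linear automorphism of $\Hilb$ that sends $\vnull$ to $\eta'(t)/\norm{\eta'(t)}$; the bijectivity of $\Psi$ then follows by decoupling the two coordinates.

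First I would check that $\Psi$ is well defined, i.e.\ that its values really lie in $\NB$. For every $t\in\R$ the rotation $R_t=R_{\vnull,\eta'(t)/\norm{\eta'(t)}}$ exists, because Lemma~\ref{newlem}(d) guarantees $\eta'(t)/\norm{\eta'(t)}\neq-\vnull$, so the denominator $1+\scp{v}{w}$ appearing in Lemma~\ref{lemma_rotation_Hilb} does not vanish. Fix $x\in\Hilb_0=\smset{\eta'(0)}^\bot$, so that $\scp{x}{\vnull}=0$. Since $R_t$ preserves the inner product and $R_t(\vnull)=\eta'(t)/\norm{\eta'(t)}$, I compute $\scp{R_t(x)}{\eta'(t)}=\norm{\eta'(t)}\scp{R_t(x)}{R_t(\vnull)}=\norm{\eta'(t)}\scp{x}{\vnull}=0$. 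Hence $R_t(x)$ is orthogonal to $\eta'(t)$ and $\Psi(t,x)=(\eta(t),R_t(x))\in\NB$.

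For injectivity, suppose $\Psi(t,x)=\Psi(s,y)$. Comparing base points gives $\eta(t)=\eta(s)$, whence $t=s$ by the injectivity of $\eta$ established in Lemma~\ref{newlem}(a). The second coordinates then reduce to $R_t(x)=R_t(y)$, and applying the inverse $R_t^{-1}$ yields $x=y$. For surjectivity, let $(\eta(t),v)\in\NB$, so $\scp{\eta'(t)}{v}=0$; here $t$ is the parameter attached to the given base point (in fact unique, again by Lemma~\ref{newlem}(a), although uniqueness is not needed for this direction). I would put $x:=R_t^{-1}(v)$. Using orthogonality of $R_t$ once more, $\scp{x}{\vnull}=\scp{R_t^{-1}(v)}{\vnull}=\scp{v}{R_t(\vnull)}=\norm{\eta'(t)}^{-1}\scp{v}{\eta'(t)}=0$, so $x\in\Hilb_0$, and by construction $\Psi(t,x)=(\eta(t),v)$.

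The only genuinely load-bearing inputs are thus Lemma~\ref{newlem}(d), which makes every $R_t$ defined, the orthogonality of the rotations (so that the normality relation $\scp{\cdot}{\eta'(t)}=0$ is transported exactly to $\scp{\cdot}{\vnull}=0$), and the injectivity of $\eta$ from Lemma~\ref{newlem}(a), which is what lets me recover $t$ from the base point. No step presents a real obstacle; the point to be careful about is verifying well-definedness in \emph{both} directions, namely that $R_t$ carries $\Hilb_0$ into the normal space at $\eta(t)$ and that $R_t^{-1}$ carries that normal space back into $\Hilb_0$.
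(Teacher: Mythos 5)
Your proposal is correct and follows essentially the same route as the paper: well-definedness of $R_t$ via Lemma~\ref{newlem}\,(d), injectivity from the injectivity of $\eta$ (Lemma~\ref{newlem}\,(a)) plus invertibility of $R_t$, and surjectivity from the fact that the isometry $R_t$ carries $\{\eta'(0)\}^\bot$ onto $\{\eta'(t)\}^\bot$. Your explicit check that $\Psi$ actually lands in $\NB$ is a small extra courtesy the paper folds into its surjectivity argument, but it is the same idea.
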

\begin{proof}
 First of all, the map
$\smfunc{R_t=R_{\vnull,\frac{\eta'(t)}{\norm{\eta'(t)}}}}{\Hilb}{\Hilb}$
is defined since~$\eta'(t)$ is never~$0$ and $\frac{\eta'(t)}{\|\eta'(t)\|}
\neq -\frac{\eta'(0)}{\|\eta'(0)\|}$ (by Lemma~\ref{newlem}\,(d)).

 \emph{Injectivity:} Assume $\Psi(t_1,x_1)=\Psi(t_2,x_2)$. Since the
curve $\smfunc{\eta}{\R}{\Hilb}$ is injective (see Lemma~\ref{newlem}\,(a)),
we get $t_1=t_2$. Now,
the rotation map is clearly bijective and hence $x_1=x_2$ which shows
injectivity of~$\Psi$.

 \emph{Surjectivity:} Let $(\eta(t),v)\in \NB$ be given.
Because $R_t$ is a bijective isometry taking
$\eta'(0)$ to a non-zero multiple of~$\eta'(t)$,
we have $R_t(\{\eta'(0)\}^\bot)=\{\eta'(t)\}^\bot$.
Thus $\Psi\Big(\{t\}\times \Hilb_0 \Big)=\{\eta(t)\}\times \{\eta'(t)\}^\bot$,
entailing the surjectivity of~$\Psi$.
\end{proof}
We will use the preceding
parametrization of the normal bundle to construct a
parametrization of a tubular neighborhood of~$\eta$.
Before, we recall
a simple lemma from the theory of metric spaces:
\begin{la}[Local injectivity around a compact set]\label{lemma_local_injectivity}
 \,Let $X$ be a\linebreak
metric space and let $\smfunc{f}{X}{Y}$ be a continuous map
to some topological space~$Y$. We assume that~$f$ is locally injective,
i.e.\ each $x\in X$ has an open neighborhood~$V_x$ in~$X$
on which~$f$ is injective.
Assume furthermore that $f$ is injective when restricted to a
non-empty compact
set $K\subseteq X$. Then~$f$ is injective on an
$\ve$-neighborhood $\rB_\ve(K):=\{x\in X\colon d_K(x)<\ve\}$
of~$K$.
\end{la}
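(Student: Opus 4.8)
The plan is to argue by contradiction, extracting a sequence of ``bad pairs'' and using the compactness of $K$ together with injectivity on $K$ to concentrate them near a single point, where local injectivity then gives a contradiction.

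Concretely, suppose the conclusion fails, so that $f$ is \emph{not} injective on $\rB_\ve(K)$ for any $\ve>0$. Applying this with $\ve=1/n$ for each $n\in\N$, I would obtain points $x_n\neq y_n$ in $\rB_{1/n}(K)$ with $f(x_n)=f(y_n)$. Since $d_K(x_n)<1/n$, the definition of the infimum yields a point $a_n\in K$ with $d(x_n,a_n)<1/n$, and likewise a point $b_n\in K$ with $d(y_n,b_n)<1/n$. Because $K$ is a compact subset of a metric space, it is sequentially compact, so after passing to a common subsequence I may assume $a_n\to a$ and $b_n\to b$ for some $a,b\in K$. The bounds $d(x_n,a_n)<1/n$ and $d(y_n,b_n)<1/n$ then force $x_n\to a$ and $y_n\to b$.

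The crucial step is to show $a=b$. By continuity of $f$ we have $f(a)=\lim_n f(x_n)=\lim_n f(y_n)=f(b)$, and since $a,b\in K$ and $f$ is injective on $K$ by hypothesis, this gives $a=b$. This is the heart of the argument and the only place where injectivity on the compact set $K$ is used: it is exactly what is needed to make the two limit points coincide, so that local injectivity can be invoked at a \emph{single} point.

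Finally I would use local injectivity to reach the contradiction. Let $V_a$ be an open neighborhood of $a$ on which $f$ is injective. Since both $x_n\to a$ and $y_n\to a$, for all sufficiently large $n$ we have $x_n,y_n\in V_a$; but then $x_n\neq y_n$ together with $f(x_n)=f(y_n)$ contradicts the injectivity of $f$ on $V_a$. This contradiction shows that $f$ must be injective on some $\rB_\ve(K)$, as claimed. I expect no serious obstacle beyond careful bookkeeping with the subsequences; the substantive content is the use of injectivity on $K$ to collapse the two limits to one point.
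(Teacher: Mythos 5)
Your argument is correct, and it reaches the conclusion by a genuinely different (though closely related) route. The paper argues directly: it metrizes $X\times X$, observes that the set $C$ of ``bad pairs'' $(x_1,x_2)$ with $f(x_1)=f(x_2)$ and $x_1\neq x_2$ is closed (its complement contains $\bigcup_x V_x\times V_x$) and disjoint from the compact set $K\times K$, and takes $\ve$ to be the positive distance between these two sets. You instead negate the conclusion, extract bad pairs in $\rB_{1/n}(K)$, and use sequential compactness of $K$ plus injectivity on $K$ to force both limit points to coincide, at which point local injectivity at that single point gives the contradiction. Your version is more self-contained and elementary -- it needs neither the product metric nor the fact that a compact set and a disjoint closed set have positive distance -- at the cost of being non-constructive about $\ve$ and requiring the subsequence bookkeeping; the paper's version is shorter once one accepts those two standard facts. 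One shared caveat: your step $f(a)=\lim_n f(x_n)=\lim_n f(y_n)=f(b)$ uses uniqueness of limits in $Y$, i.e.\ implicitly assumes $Y$ Hausdorff; the paper's proof has the very same implicit assumption (closedness of $C$ rests on the diagonal of $Y\times Y$ being closed), and it is harmless in the intended application where $Y$ is a Hilbert space. So this is not a gap relative to the paper's own standard of rigor.
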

\begin{proof}
 The product space $X\times X$ becomes a metric space if we define
the distance between $(x_1,x_2)$ and $(x_1',x_2')$ as
the maximum of $d(x_1,x_1')$ and $d(x_2,x_2')$.
For $x\in X$ and $(x_1,x_2)\in V_x\times V_x$,
we have $f(x_1)=f(x_2)$ if and only if $x_1=x_2$.
The set~$C$
of all pairs $(x_1,x_2)$ on which~$f$ fails to be injective
can therefore be written as
 \begin{eqnarray*}
  C & := & \set{(x_1,x_2)\in X\times X}{f(x_1)=f(x_2)\hbox
{ and } x_1\neq x_2}\\
&=& \left\{(x_1,x_2)\in (X\times X)\setminus \bigcup_{x\in X}(V_x\times V_x)\colon
f(x_1)=f(x_2)\right\},
 \end{eqnarray*}
showing that~$C$ is a closed subset of
the product $X\times X$. The set~$C$ is disjoint to the
compact set $K\times K$ since~$f$ is injective on~$K$. Let~$\ve$ be
the distance between the sets~$C$ and $K\times K$. It follows that~$f$
is injective on~$\rB_\ve(K)$.\vspace{-3mm}
\end{proof}
\begin{la}[Existence of a tubular neighborhood]\label{lemma_tubular}
Consider the map
\[
\Phi\colon \R\times\Hilb_0 \to \Hilb,\quad (t,x)\mapsto \eta(t) +  R_t(x),
\]
which is the composition of the parametrization map~$\Psi$
from Lemma~{\rm\ref{lemma_parametrization_of_normal_bundle}} and the addition
in the Hilbert space~$\Hilb$.

 Then there exists a constant $\rho>0$ such that~$\Phi$ maps the open set
\[
 \Omega_\rho :=\set{(t,x)\in \R\times \Hilb_0}{\norm{x}<\rho}
\]
diffeomorphically onto the open set
\[
 U_\rho := \set{x\in \Hilb}{d_{\eta(\R)}(x)<\rho}.
\]
Moreover, for all $(t,x)\in\Omega_\rho$, the unique point
on $\eta(\R)$ with minimum distance to $\Phi(x,t)$ is~$\eta(t)$.
\end{la}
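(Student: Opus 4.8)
The plan is to establish the lemma in three stages: local injectivity of $\Phi$ near the zero section, a uniform lower bound turning local injectivity into global injectivity on a tube of some radius $\rho$, and finally the diffeomorphism and nearest-point claims. First I would observe that $\Phi$ is smooth, since $\eta$ is smooth, $R_t$ depends smoothly on $t$ (Lemma~\ref{lemma_rotation_Hilb}), and addition in $\Hilb$ is smooth. The derivative of $\Phi$ at a point $(t,0)$ on the zero section splits into $\eta'(t)$ in the $t$-direction (a nonzero vector) and the isometry $R_t$ on the $\Hilb_0$-direction, whose image is $\{\eta'(t)\}^\perp$; since $\eta'(t)\notin\{\eta'(t)\}^\perp$, the full derivative $D\Phi(t,0)$ is a linear isomorphism of $\R\times\Hilb_0$ onto $\Hilb$. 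By the inverse function theorem on Banach spaces, $\Phi$ is therefore a local diffeomorphism at each $(t,0)$, hence locally injective near the whole zero section $\R\times\{0\}$.

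The heart of the argument is upgrading this to genuine injectivity on a tube of uniform radius, and here is where I expect the main obstacle. The difficulty is that $\R\times\{0\}$ is \emph{not} compact, so I cannot apply Lemma~\ref{lemma_local_injectivity} directly to the whole zero section. The remedy is to exploit the $\Z$-periodicity built into $\eta$: because $\eta(t+1)=$ the shifted sum and the shift operator $e_k\mapsto e_{k+1}$ is an isometry commuting appropriately with $\Phi$, the geometry of $\Phi$ on $[n,n+1]\times\Hilb_0$ is an isometric copy of that on $[0,1]\times\Hilb_0$. Concretely, I would fix a compact slab such as $K:=[-2,3]\times\{0\}$ and apply Lemma~\ref{lemma_local_injectivity} to get an $\epsilon>0$ with $\Phi$ injective on $\rB_\epsilon(K)$; then, transporting by the isometric shift, $\Phi$ is injective on the $\epsilon$-neighborhood of $[n-2,n+3]\times\{0\}$ for every $n$, with the \emph{same} $\epsilon$.

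With local injectivity controlled on overlapping slabs of bounded width, I would patch them into global injectivity on a tube $\Omega_\rho$ for a suitable $\rho\le\epsilon$. Suppose $\Phi(t_1,x_1)=\Phi(t_2,x_2)$ with $\|x_1\|,\|x_2\|<\rho$. If $|t_1-t_2|$ is small (say $\le 5$), both points lie in a common slab $[n-2,n+3]\times\{0\}$-neighborhood where injectivity already holds, forcing $(t_1,x_1)=(t_2,x_2)$. If $|t_1-t_2|$ is large, I invoke Lemma~\ref{newlem}(c): for $|s-t|>3$ the basepoints satisfy $\|\eta(s)-\eta(t)\|\ge\rho_0$, so by the triangle inequality $\|\Phi(t_1,x_1)-\Phi(t_2,x_2)\|\ge\rho_0-\|x_1\|-\|x_2\|>\rho_0-2\rho$, which is positive once $\rho<\rho_0/2$; this contradicts the two images being equal. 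Choosing $\rho<\min\{\epsilon,\rho_0/2\}$ thus yields injectivity of $\Phi$ on $\Omega_\rho$.

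Finally I would assemble the remaining claims. Injectivity plus the local-diffeomorphism property makes $\Phi|_{\Omega_\rho}$ a smooth bijection onto its open image with smooth inverse, i.e.\ a diffeomorphism; that the image is precisely $U_\rho$ follows because $\Phi(t,x)=\eta(t)+R_t(x)$ with $R_t(x)\perp\eta'(t)$ has distance exactly $\|x\|$ to $\eta(t)$, so $d_{\eta(\R)}(\Phi(t,x))\le\|x\|<\rho$ shows $\Phi(\Omega_\rho)\subseteq U_\rho$, while any point within distance $\rho$ of $\eta(\R)$ is realized as such a normal vector of length $<\rho$ via the surjectivity in Lemma~\ref{lemma_parametrization_of_normal_bundle}, giving the reverse inclusion. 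For the nearest-point statement, given $(t,x)\in\Omega_\rho$ I must check that $\eta(t)$ is the \emph{unique} closest point of $\eta(\R)$ to $\Phi(t,x)$: points $\eta(s)$ with $|s-t|>3$ are too far away by Lemma~\ref{newlem}(c) and the bound $\|x\|<\rho<\rho_0/2$, so the minimum is attained on a compact $s$-range, and there a local-uniqueness argument (after shrinking $\rho$ if necessary, using that the only normal foot of a short normal vector is its own basepoint) pins the minimizer to $s=t$.
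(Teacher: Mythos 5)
Your overall strategy matches the paper's: inverse function theorem along the zero section, Lemma~\ref{lemma_local_injectivity} on a compact slab, propagation to all slabs by the shift isometry, and the dichotomy between small and large $|t_1-t_2|$ using Lemma~\ref{newlem}(c) and $\rho<\rho_0/2$. However, there is one genuine gap, in the reverse inclusion $U_\rho\subseteq\Phi(\Omega_\rho)$. You claim that any point within distance $\rho$ of $\eta(\R)$ ``is realized as such a normal vector of length $<\rho$ via the surjectivity in Lemma~\ref{lemma_parametrization_of_normal_bundle}.'' That lemma only says that every vector normal to the curve is of the form $R_t(x)$ with $x\in\Hilb_0$; it says nothing about whether a given $p\in U_\rho$ decomposes as $\eta(s)+v$ with $v\perp\eta'(s)$. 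To obtain such a decomposition one takes $s$ minimizing $\|\eta(s)-p\|$ and uses the first-order condition $\langle\eta'(s),\eta(s)-p\rangle=0$; the crux is whether the infimum $d_{\eta(\R)}(p)$ is \emph{attained}. In an infinite-dimensional Hilbert space, with the parameter range $\R$ non-compact, a minimizing sequence $\eta(s_k)$ need not subconverge, so attainment is not automatic. The paper devotes roughly a third of its proof to exactly this point: if the minimizing parameters $s_k$ were unbounded, Lemma~\ref{newlem}(b) together with $\langle e_m,p\rangle\to 0$ forces $\lim\|\eta(s_k)-p\|^2\geq\rho_0^2$, contradicting $d_{\eta(\R)}(p)<\rho\leq\rho_0$. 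Without this (or an equivalent) argument, your surjectivity step does not go through.

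Two smaller points. First, a slab $[n-2,n+3]$ has width $5$, so $|t_1-t_2|\le 5$ does not guarantee a common slab (take $t_1=0.5$, $t_2=5.4$); this is harmless because Lemma~\ref{newlem}(c) already covers all $|t_1-t_2|>3$, so you only need the slab case for $|t_1-t_2|\le 3$ --- but the case split should be stated that way. Second, the final nearest-point uniqueness is left as a gesture (``a local-uniqueness argument \ldots pins the minimizer to $s=t$''); the clean way to finish, which is what the paper does, is to note that any minimizer $s^*$ yields, via orthogonality, a representation $\Phi(t,x)=\Phi(s^*,y)$ with $(s^*,y)\in\Omega_\rho$, whereupon the already-established injectivity of $\Phi$ on $\Omega_\rho$ forces $s^*=t$; no further shrinking of $\rho$ is needed. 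Here the attainment of the minimum is unproblematic, since, as you note, Lemma~\ref{newlem}(c) confines the competition to the compact range $|s-t|\le 3$.
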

\begin{proof}
Observe first that~$\Phi$ is a smooth map as a composition of
smooth maps. Next, we calculate the directional
derivative of $\Phi$ at a point $(t_0,0)$ in a direction $(t,x)$:
\begin{eqnarray*}
\lefteqn{\lim_{s\to 0} \frac{\Phi\left(	(t_0,0) + s(t,x)
   \right)-\Phi(t_0,0)}{s}}\quad \\
&=& \lim_{s\to 0}\frac{1}{s}\left(
   \Phi(t_0+st,sx)-\Phi(t_0,0)	\right)\\
& =& \lim_{s\to 0}\frac{1}{s}\left(
    \eta(t_0+st) + R_{t_0+st}(sx) - \eta(t_0) - R_{t_0}(0)\right)\\
& = & \lim_{s\to 0}\frac{1}{s}\left(
    \eta(t_0+st) - \eta(t_0) \right)+\lim_{s\to 0}R_{t_0+st}(x)\\[1mm]
& = & \eta'(t_0)\cdot t + R_{t_0}(x).
\end{eqnarray*}
 Hence, the derivative of~$\Phi$ at $(t_0,0)$ is
the linear mapping
$\R\times\Hilb_0\to\Hilb$, $(t,x)\mapsto \eta'(t_0)t + R_{t_0}(x)$
which is invertible.\\[2.3mm]
By the Inverse Function Theorem, there is an open
neighborhood $\Omega_{t_0}$ of $(t_0,0)$ in $\R\times\Hilb_0$ such
that~$\Phi|_{\Omega_{t_0}}$ is a diffeomorphism onto its open
image $\Phi(\Omega_{t_0})$.\\[2.3mm]
For the moment, let us restrict our attention to the compact set
$[0,4]\times\smset{0}\subseteq \R\times \Hilb_0$ on which~$\Phi$ is
injective (as so is~$\eta$). Then, by Lemma~\ref{lemma_local_injectivity},
there is $\rho>0$ such that~$\Phi$ is injective on
$[0,4]\times \oBallin{\rho}{\Hilb_0}{0}$
(where $\oBallin{\rho}{\Hilb_0}{0}:=\{x\in \Hilb_0\colon \|x\|<\rho\}$).
Since $[0,4]\times \{0\}$ is covered by open
sets on which~$\Phi$ is a diffeomorphism,
after shrinking~$\rho$
we may assume that $\Phi$
takes $]0,4[\times \oBallin{\rho}{\Hilb_0}{0}$
diffeomorphically onto an open set.
We may also assume that $\rho <  \frac{\rho_0}{2}$,
for~$\rho_0$ as in~(\ref{eqnrho0}).
Then $\Omega_\rho:=\R\times \oBallin{\rho}{\Hilb_0}{0}$
has all the required properties:\\[2.3mm]
Exploiting the self-similarity
of~$\eta$, let us show that
$\Phi$ is injective on the set
$[n,n+4]\times \oBallin{\rho}{\Hilb_0}{0}$
for each $n\in\Z$ and that~$\Phi$ restricts to a diffeomorphism
from $]n,n+4[\,\times \oBallin{\rho}{\Hilb_0}{0}$
onto an open subset of~$\Hilb$.
To this end, let
\[
S_n\colon \Hilb\to\Hilb
\]
be the bijective isometry determined by $S_n(e_k)=e_{k+n}$
for all $k\in \Z$. Then $\eta(t+n)=S_n\eta(t)$
for all $t\in \R$ and thus also $\eta'(t+n)=S_n\eta'(t)$.
Hence
\begin{eqnarray*}
\Phi(t+n,x) &=& \eta(t+n)+R_{t+n}(x)\; =\;
S_n\eta(t)+S_nR_tR_t^{-1}S_n^{-1}R_{t+n}(x)\\
&= & S_n\Phi(t,R_t^{-1}S_n^{-1}R_{t+n}(x)).\vspace{-4mm}
\end{eqnarray*}
The map $\;\,\Theta_n\colon \R\times \Hilb_0\to\R\times\Hilb_0$,
$\;\;\Theta_n(t,x):=(t,R_t^{-1}S_n^{-1}R_{t+n}(x))$\\[1mm]
is a bijection and smooth (using that the mapping
\[
\R\times \Hilb_0\to \Hilb,\quad (t,x)\mapsto
R_t^{-1}(x)=R_{\frac{\eta'(t)}{\|\eta'(t)\|},\frac{\eta'(0)}{\|\eta'(0)\|}}(x)
\]
is smooth). Also $\Theta_n^{-1}$ is smooth,
as $\Theta_n^{-1}(t,x)=(t,R_{t+n}^{-1}S_nR_t(x))$.
Note that $R_t^{-1}S_n^{-1}R_{t+n}$ is a bijective isometry
which fixes $\eta'(0)$
and hence takes~$\Hilb_0=\{\eta'(0)\}^\bot$ onto itself.
Hence~$\Theta_n$
is a diffeomorphism that maps
$]0,4[\times \oBallin{\rho}{\Hilb_0}{0}$
(as well as $[0,4]\times \oBallin{\rho}{\Hilb_0}{0}$)
onto itself.
Now $\Phi(t,x)=S_n\Phi(\Theta_n(t-n,x))$
by the above, whence $\Phi$ takes $]n,n+4[\,\times\oBallin{\rho}{\Hilb_0}{0}$
diffeomorphically onto an open set, and is injective on $[n,n+4]\times \oBallin{\rho}{\Hilb_0}{0}$
(as desired).\\[2.3mm]
$\Phi$ is injective on~$\Omega_\rho$:
Let $(s,x),(t,y)\in \R\times \oBallin{\rho}{\Hilb_0}{0}$
with $\Phi(s,x)=\Phi(t,y)$.
If we had $|s-t|>3$, then
$\|\Phi(s,x)-\Phi(t,x)\|=\|\eta(s)-\eta(t)+R_s(x)-R_t(y)\|
\geq \|\eta(s)-\eta(t)\|-\|R_s(x)\|-\|R_t(y)\|\geq\rho_0-2\rho>0$
would follow, contradiction. Thus $|s-t|\leq 3$
and hence $s,t\in [n,n+4]$ for
some $n\in \Z$.
Thus $(s,x)=(t,y)$,
by injectivity of~$\Phi$ on
$[n,n+4]\times\oBallin{\rho}{\Hilb_0}{0}$.\\[2.4mm]
$\Phi(\Omega_\rho)$ is open and $\Phi|_{\Omega_\rho}$ is a diffeomorphism
onto its image: We just verified that $\Phi|_{\Omega_\rho}$
is injective. Since $\Omega_\rho=\bigcup_{n\in\Z}
]n,n+4[\times
\oBallin{\rho}{\Hilb_0}{0}$
and $\Phi$ takes each of the sets
$]n,n+4[\times \oBallin{\rho}{\Hilb_0}{0}$
diffeomorphically onto an open subset of~$\Hilb$,
the assertion follows.\\[2.3mm]
We now show that $\Phi(\Omega_\rho)=U_\rho$.
Since $\|\eta(t)-\Phi(t,x)\|=\|R_t(x)\|<\rho$
if $(t,x)\in \Omega_\rho$,
we have $\Phi(\Omega_\rho)\subseteq U_\rho$.
For the converse inclusion,
let $p\in U_\rho$. To see that $p\in \Phi(\Omega_\rho)$,
we first 
show that the distance $d_{\eta(\R)}(p)$
is attained, i.e., there is $s\in \R$
such that $d_{\eta(\R)}(p)=\|\eta(s)-p\|$.
If this was wrong, we could
choose a sequence $s_k\in \R$
such that $\|\eta(s_k)-p\|\to d_{\eta(\R)}(p)$.
Then $|s_k|\to\infty$ (otherwise, $s_k$ had a bounded subsequence
inside $[{-R},R]$ for some $R>0$,
and then the minimum of the continuous function
$s\mapsto \|\eta(s)-p\|$ on this compact interval
would coincide with $d_{\eta(\R)}(p)$,
a contradiction).
For each $k\in\N$,
there is $n_k\in\Z$ such that $s_k\in [n_k,n_k+1[$.
Then $|n_k|\to\infty$ as well.
After passing to a subsequence,
we may assume that $s_k-n_k\in [0,1]$
converges to some $\Delta\in [0,1]$.
Writing $p_m:=\langle e_m,p\rangle$ for $m\in\Z$,
we have
$\|p\|^2=\sum_{m\in\Z}p_m^2$ and
$(p_m)_{m\in\Z}\in \ell^2(\Z)$.
Lemma~\ref{newlem}\,(b)
now shows that
\begin{eqnarray*}
\|\eta(s_k)-p\|^2 &=&
|h(s_k-n_k)-p_{n_k}|^2
+ |h(s_k-n_k-1)-p_{n_k+1}|^2\\
& & \, + \, |h(s_k-n_k-2)-p_{n_k+2}|^2 \;+\!\!
\sum_{m\not\in\{n_k,n_k+1,n_k+2\}}p_m^2.\vspace{-2mm}
\end{eqnarray*}
Letting $k\to\infty$
(and using that $p_m\to 0$ as $|m|\to\infty$),
we deduce that
\[
d_{\eta(\R)}(p)^2\,=\,
h(\Delta)^2
+
h(\Delta-1)^2
+
h(\Delta-2)^2+ \|p\|^2\geq\rho_0^2
\]
and thus $d_{\eta(\R)}(p)\geq\rho_0$.
But $d_{\eta(\R)}(p)<\rho\leq\rho_0$,
contradiction. Hence, there exists
$s\in \R$ such that $d_{\eta(\R)}(p)=\|\eta(s)-p\|$.\\[2.3mm]
By the preceding, the distance
between the points~$\eta(r)$ and~$p$ (as a
function on~$r$) is minimized for $r=s$.
Since $\frac{d}{dr}\|\eta(r)-p\|^2=2\langle\eta'(r),\eta(r)-p\rangle$,
we deduce that
the derivative~$\eta'(s)$ has to be orthogonal to $\eta(s)-p$.
Thus $y:=R_s^{-1}(p-\eta(s))\in \Hilb_0$
and $p=\Phi(s,y)$. Since $\|y\|=\|p-\eta(s)\|=d_{\eta(\R)}(p)
<\rho$, we have $(s,y)\in \Omega_\rho$
and hence $p=\Phi(s,y)\in \Phi(\Omega_\rho)$.
Thus $U_\rho=\Phi(\Omega_\rho)$.\\[2.3mm]
If also $p=\Phi(t,x)$ for some $(t,x)\in \Omega_\rho$,
then $(t,x)=(s,y)$ by injectivity of~$\Phi$
and thus $s=t$. Hence $\eta(t)$ is the unique point
in~$\eta(\R)$ which minimizes $\|\eta(t)-\Phi(t,x)\|$.
\end{proof}
We are now in the position to
prove the facts~(b) and (c)
stated in Section~\ref{sec_ConstructionOfVectorField}.

To prove~(b), we use the number $\rho>0$ constructed in
Lemma~\ref{lemma_tubular} for the curve~$\eta$. Since the curves~$\gamma$
and $\eta$ differ only by a re-parametrization, the existence of a unique
nearest point remains true.

\noindent
To obtain~(c), we may write the function
\[
      \smfunc{\tau}{U_\rho}{\,\II}
\]
which assigns to each point $x\in U_\rho$ the index
$t\in\,\II$ such that~$\gamma(t)$ has minimum distance to~$x$ as follows:
 \[
  \tau=\varphi^{-1}\circ \pi_\R\circ \Phi^{-1}
 \]
where $\smfunc{\Phi}{\Omega_\rho}{U_\rho}$ is the diffeomorphism from
Lemma~\ref{lemma_tubular}, the mapping\linebreak
$\pi_\R\colon \Omega_\rho\to \R$, $(t,x)\mapsto t$
denotes the projection onto the first component and
$\smfunc{\varphi}{\II\,}{\R}$ is the diffeomorphism used to define
the curve $\gamma$.
As a composition of smooth maps, the map~$\tau$ is smooth.
The proof is complete.\,\vspace{-4mm}\Punkt
{\small

{\bf Rafael Dahmen}, Technische Universit\"{a}t Darmstadt,
Schlo\ss{}gartenstr.\ 7,\\
64285 Darmstadt,
Germany; Email: {\tt dahmen@mathematik.tu-darmstadt.de}\\[3mm]
{\bf Helge  Gl\"{o}ckner}, Universit\"at Paderborn, Institut
f\"{u}r Mathematik,\\
Warburger Str.\ 100, 33098 Paderborn, Germany;
Email: {\tt  glockner\at{}math.upb.de}}\vfill
\end{document}